\providecommand{\U}[1]{\protect\rule{.1in}{.1in}}
\newtheorem{theorem}{Theorem}
\theoremstyle{plain}
\newtheorem{corollary}{Corollary}
\newtheorem{lemma}{Lemma}
\newtheorem{remark}{Remark}
\numberwithin{equation}{section}
\begin{document}
\title[John-Nirenberg
self improving inequalities]{Marcinkiewicz spaces, Garsia-Rodemich spaces and the scale of John-Nirenberg
self improving inequalities}
\author{Mario Milman}
\address{Instituto Argentino de Matematica}
\email{mario.milman@gmail.com}
\urladdr{https://sites.google.com/site/mariomilman}
\thanks{The author was partially supported by a grant from the Simons Foundation
(\#207929 to Mario Milman)}

\begin{abstract}
We extend to n-dimensions a characterization of the Marcinkiewicz
$L(p,\infty)$ spaces first obtained by Garsia-Rodemich in the one dimensional
case. This leads to a new proof of the John-Nirenberg self-improving
inequalities. We also show a related result that provides still a new
characterization of the $L(p,\infty)$ spaces in terms of distribution
functions, reflects the self-improving inequalities directly, and also
characterizes $L(\infty,\infty),$ the rearrangement invariant hull of $BMO.$
We show an application to the study of tensor products with $L(\infty,\infty)$
spaces, which complements the classical work of O'Neil \cite{oneil} and the
more recent work of Astashkin \cite{astashkin}.

\end{abstract}
\maketitle

\section{Introduction}

In their seminal paper \cite{jn}, John-Nirenberg introduced the space $BMO$
and proved the celebrated John-Nirenberg inequality for functions in $BMO.$ It
is also well known, although perhaps somewhat less so, that in the same paper,
John-Nirenberg showed that the $BMO$ self improvement inequality can be
refined and framed as a scale of inequalities. These inequalities (or
embeddings) are associated with what we nowadays call \textquotedblleft
John-Nirenberg spaces". This result of John-Nirenberg, which we now describe,
is the starting point of our development in this paper.

Let $Q_{0}\subset\mathbb{R}^{n},$ be a fixed cube\footnote{A \textquotedblleft
cube" in this paper will always mean a cube with sides parallel to the
coordinate axes.}$,$ $1\leq p<\infty.$ Let
\begin{align*}
P(Q_{0})  &  =\{\{Q_{i}\}_{i\in N}:\text{countable families of subcubes }%
Q_{i}\subset Q_{0},\text{ }\\
&  \text{with pairwise disjoint interiors}\}.
\end{align*}
The John-Nirenberg spaces are defined by%
\[
JN_{p}(Q_{0})=\{f\in L^{1}(Q_{0}):JN_{p}(f,Q_{0})<\infty\},
\]
where\footnote{in what follows, as usual, $f_{Q}=\frac{1}{\left\vert
Q\right\vert }\int_{Q}fdx.$}%
\[
JN_{p}(f,Q_{0})=\sup_{\{Q_{i}\}_{i}\in P(Q_{0})}\left\{  \left\{
{\displaystyle\sum\limits_{i}}
\left\vert Q_{i}\right\vert \left(  \frac{1}{\left\vert Q_{i}\right\vert }%
\int_{Q_{i}}\left\vert f-f_{Q_{i}}\right\vert dx\right)  ^{p}\right\}
^{1/p}\right\}  .
\]
Let us also recall that, for a given measure space, the Marcinkiewicz
$L(p,\infty)$ spaces, $1\leq p<\infty,$ are defined by demanding\footnote{Here
$f^{\ast}$ denotes the non-increasing rearrangement of $f$ and $\lambda_{f}$
its distribution function (cf. \cite{bs}).} that $\left\Vert f\right\Vert
_{L(p,\infty)}^{\ast}<\infty,$ where%
\begin{equation}
\left\Vert f\right\Vert _{L(p,\infty)}^{\ast}=\sup_{t>0}\{f^{\ast}%
(t)t^{1/p}\}=\sup_{t>0}\{t\left(  \lambda_{f}(t)\right)  ^{1/p}\};
\label{verarriba}%
\end{equation}
while for $p=\infty,$ the space $L(\infty,\infty)$ (cf. \cite{bds}) is
defined\footnote{Some authors (including sometimes the author of this paper)
use a different notation and let $W$ denote what we call $L(\infty,\infty),$
at the same time that they use the notation $L(\infty,\infty)=L^{\infty}.$} by
the condition $\left\Vert f\right\Vert _{L(\infty,\infty)}<\infty,$ where%
\[
\left\Vert f\right\Vert _{L(\infty,\infty)}=\sup_{t>0}\{f^{\ast\ast
}(t)-f^{\ast}(t)\},
\]
and%
\[
f^{\ast\ast}(t)=\frac{1}{t}\int_{0}^{t}f^{\ast}(s)ds.
\]
Then (cf. \cite[Lemma 3]{jn}, and also \cite[Theorem 4.1, pag 209]{torchinsky}
for a more detailed proof),

\begin{theorem}
\label{teodejn}Let $1<p<\infty.$ Suppose that $f\in JN_{p}(Q_{0}),$ then
$f-f_{Q_{0}}\in L(p,\infty)(Q_{0}),$ and there exists a constant
$A(p,Q_{0},n)$ such that
\[
\left\Vert f-f_{Q_{0}}\right\Vert _{L(p,\infty)(Q_{0})}\leq A(p,Q_{0}%
,n)JN_{p}(f,Q_{0}).
\]
In particular,%
\[
f-f_{Q_{0}}\in%
{\displaystyle\bigcap\limits_{r<p}}
L^{r}(Q_{0}).
\]

\end{theorem}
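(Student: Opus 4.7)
\medskip

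\noindent\textbf{Proof plan.} My strategy is the classical Calder\'on--Zygmund selection argument, with the key twist being that one must extract the full $\ell^p$ strength of the $JN_p$ hypothesis, not just a weak $L^1$ bound. After normalizing to $f_{Q_0}=0$, the problem is to show that for all sufficiently large $\lambda$,
\[
|\{x\in Q_0:|f(x)-f_{Q_0}|>\lambda\}|\;\leq\; C(p,n)\,\lambda^{-p}\,JN_{p}(f,Q_{0})^{p}.
\]
Since the single-cube family $\{Q_0\}$ already forces $\frac{1}{|Q_0|}\int_{Q_0}|f-f_{Q_0}|\leq JN_p(f,Q_0)\,|Q_0|^{-1/p}$, it suffices to handle $\lambda$ above this threshold.

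\medskip

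\noindent For $\lambda$ in this range, I would apply a Calder\'on--Zygmund stopping time to $|f-f_{Q_0}|$ at level $\lambda/(2\cdot 2^n)$, producing maximal disjoint dyadic subcubes $\{Q_i\}\subset Q_0$ with
\[
\tfrac{\lambda}{2\cdot 2^n}\;<\;\tfrac{1}{|Q_i|}\int_{Q_i}|f-f_{Q_{0}}|\;\leq\;\tfrac{\lambda}{2},
\qquad |f-f_{Q_0}|\leq \tfrac{\lambda}{2}\text{ a.e.\ on }Q_0\setminus\bigcup Q_i,
\]
and therefore $|f_{Q_i}-f_{Q_0}|\leq \lambda/2$. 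On each $Q_i$ one then has $\{|f-f_{Q_0}|>\lambda\}\cap Q_i\subseteq \{|f-f_{Q_i}|>\lambda/2\}\cap Q_i$. Writing $\omega_i=\tfrac{1}{|Q_i|}\int_{Q_i}|f-f_{Q_i}|$, the $JN_p$ hypothesis supplies the crucial $\ell^p$ control
\[
\Bigl(\sum_i |Q_i|\,\omega_i^{p}\Bigr)^{1/p}\;\leq\;JN_{p}(f,Q_{0}),
\]
and this is what must be fed into the estimate in place of a naive $L^1$ Markov bound. Iterating the Calder\'on--Zygmund construction inside each $Q_i$ (using $f-f_{Q_i}$) generates a tree of cubes $\{Q_i^{(k)}\}$ satisfying $|f-f_{Q_0}|\leq k\cdot 2^n\lambda$ a.e.\ off $\bigcup_i Q_i^{(k)}$, together with the recursive inequality $\sum_i |Q_i^{(k)}|\leq \frac{1}{\lambda}\,JN_p(f,Q_0)\,\bigl(\sum_i|Q_i^{(k-1)}|\bigr)^{1/p'}$ obtained by Hölder against the $JN_p$ estimate above.

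\medskip

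\noindent The main obstacle is extracting the sharp exponent $\lambda^{-p}$ from this recursion: a one-shot Calder\'on--Zygmund/Markov combination gives only a decay like $\lambda^{-(2-1/p)}$, which is strictly weaker than $L(p,\infty)$. The resolution is either (a) to iterate the recursion $k$ times and optimize the pair $(\alpha,k)$ against the desired level $\lambda=k\cdot 2^n\alpha$, exploiting that the fixed point $(JN_p/\alpha)^p$ of the recursion is exactly the right quantity, or (b) to take a sharp-function detour: observe that the $JN_p$ hypothesis immediately yields
\[
\bigl|\{M^{\sharp,d}_{Q_0} f > \lambda\}\bigr|\;\leq\;\lambda^{-p}\,JN_p(f,Q_0)^{p}
\]
by selecting maximal dyadic subcubes with oscillation $>\lambda$, giving $\|M^{\sharp,d}_{Q_0}f\|_{L(p,\infty)(Q_0)}\leq JN_p(f,Q_0)$, and then invoking a Bennett--DeVore--Sharpley type rearrangement inequality $(f-f_{Q_0})^{**}(t)-(f-f_{Q_0})^{*}(t)\leq c\,(M^{\sharp}f)^{**}(t)$ to pass from control of the sharp function to control of $f-f_{Q_0}$. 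Integrating this pointwise inequality against $ds/s$ over $(t,|Q_0|)$ and using that $(f-f_{Q_0})^{**}(|Q_0|)\lesssim JN_p/|Q_0|^{1/p}$, one concludes $t^{1/p}(f-f_{Q_0})^{**}(t)\lesssim JN_p(f,Q_0)$, which is the desired bound; the inclusion $f-f_{Q_0}\in\bigcap_{r<p}L^{r}(Q_0)$ then follows from the standard embedding $L(p,\infty)(Q_0)\hookrightarrow L^r(Q_0)$ for $r<p$ on finite measure sets.
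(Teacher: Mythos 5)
Your proposal branches into two routes, and they do not fare equally well.

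Route (b) is correct and is, in essence, the proof this paper intends. The paper does not prove Theorem \ref{teodejn} in situ (it cites John--Nirenberg and Torchinsky for that), but derives it as a corollary of Theorem \ref{teomarkao}, whose two halves are your two steps in thin disguise: your selection estimate $\bigl|\{M^{\sharp,d}_{Q_0}f>\lambda\}\bigr|\le\lambda^{-p}JN_p(f,Q_0)^p$ is the same disjointness-plus-H\"older computation as the embedding $JN_p\subset GaRo_p$, and the Bennett--DeVore--Sharpley inequality $f^{**}-f^{*}\lesssim (f^{\sharp})^{**}$ is exactly what the paper re-proves by hand (via the Bennett--Sharpley covering lemma) with the Garsia--Rodemich functional standing in for $(f^{\sharp})^{**}$. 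Your closing integration of $f^{**}-f^{*}$ against $ds/s$ and the single-cube bound $(f-f_{Q_0})^{**}(|Q_0|)\le JN_p(f,Q_0)\,|Q_0|^{-1/p}$ also match. Two details to tidy: the local BDS inequality is valid only for $t$ below a dimensional fraction of $|Q_0|$, so the $ds/s$ integration should stop there and the remaining range of $t$ be absorbed by the trivial bound $t\bigl(g^{**}(t)-g^{*}(t)\bigr)\le\|g\|_{L^1}$; and if you invoke BDS for the full (non-dyadic) sharp function you need a Vitali-type selection rather than dyadic maximality in your weak-type estimate, which costs only a dimensional constant since the $JN_p$ functional is taken over arbitrary disjoint families of subcubes.

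Route (a), as described, has a genuine quantitative gap. Keeping the stopping level equal to $\alpha$ at every generation forces the exceptional threshold after $k$ generations to be $k2^{n}\alpha$ (linear in $k$), while the recursion $m_k\le (JN_p/\alpha)\,m_{k-1}^{1/p'}$ only contracts the \emph{logarithm} of the ratio $m_k/(JN_p/\alpha)^p$ by a factor $1/p'$ per step, starting from $m_0=|Q_0|$. Writing $\lambda=k2^{n}\alpha$ and optimizing over $k$, one needs $k$ of order $\log\log\bigl(\lambda^{p}|Q_0|/JN_p^{p}\bigr)$ before $m_k$ is comparable to the fixed point, and the prefactor $(k2^{n})^{p}$ then yields only
\[
\lambda^{p}\lambda_{f-f_{Q_0}}(\lambda)\;\lesssim\;\Bigl(\log\log\bigl(\lambda^{p}|Q_0|/JN_p^{p}\bigr)\Bigr)^{p}JN_p(f,Q_0)^{p},
\]
which is strictly weaker than the $L(p,\infty)$ bound (although it does suffice for $\bigcap_{r<p}L^{r}$). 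The standard repair is to let the stopping levels grow geometrically, $\alpha_j=\alpha\rho^{j}$ with $\rho>1$ and $\alpha\sim JN_p(f,Q_0)|Q_0|^{-1/p}$: the cumulative threshold $T_k\sim 2^{n}\alpha\rho^{k}$ is then comparable to the last level rather than to $k$ times it, the quantity $u_k=T_k^{p}m_k$ satisfies $u_k\le C(n,p,\rho)\,JN_p(f,Q_0)\,u_{k-1}^{1/p'}$ with $u_0\sim 2^{np}JN_p(f,Q_0)^{p}$, and hence $u_k\le C'(n,p,\rho)\,JN_p(f,Q_0)^{p}$ uniformly in $k$, which is the desired estimate. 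Either adopt that fix or simply commit to route (b).
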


The limiting condition defining $JN_{p}(Q_{0})$ when $p=\infty$
corresponds\footnote{The $JN_{\infty}(Q_{0})$ condition would read%
\[
\sup_{\{Q_{i}\}_{i}\in P(Q_{0})}\frac{1}{\left\vert Q_{i}\right\vert }%
\int_{Q_{i}}\left\vert f-f_{Q_{i}}\right\vert dx<\infty.
\]
} to $BMO,$ and in this case Theorem \ref{teodejn} corresponds to a version of
the well known John-Nirenberg inequality \cite{jn}.

In the one dimensional case, Garsia and Rodemich \cite{garro} improved on
Theorem \ref{teodejn}. To formulate the Garsia and Rodemich result it will be
convenient to introduce a different scale of spaces which we shall term
Garsia-Rodemich spaces. It will be useful for later use to give the relevant
definitions in the $n-$dimensional case.

Let $Q_{0}\subset\mathbb{R}^{n}$ be a fixed cube$,$ let $1\leq p<\infty,$ and
let $p^{\prime}$ be defined by $\frac{1}{p}+\frac{1}{p^{\prime}}=1.$ The
Garsia-Rodemich spaces $GaRo_{p}(Q_{0})$ are defined as follows. We shall say
that $f\in GaRo_{p}(Q_{0}),$ if and only if $f\in L^{1}(Q_{0}),$ and $\exists
C>0$ such that for all $\{Q_{i}\}_{i\in N}$ $\in P(Q_{0})$ we have
\begin{equation}%
{\displaystyle\sum\limits_{i}}
\frac{1}{\left\vert Q_{i}\right\vert }\int_{Q_{i}}\int_{Q_{i}}\left\vert
f(x)-f(y)\right\vert dxdy\leq C\left(
{\displaystyle\sum\limits_{i}}
\left\vert Q_{i}\right\vert \right)  ^{1/p^{\prime}}.\label{launo}%
\end{equation}
We let%
\[
GaRo_{p}(f,Q_{0})=\inf\{C>0:\text{such that (\ref{launo}) holds}\}.
\]
Then we have (cf. \cite{garro})

\begin{theorem}
\footnote{Here it seems appropriate to bring up the following. In his paper
\cite{dy}, Dyson writes \textquotedblleft Professor Littlewood, when he makes
use of an algebraic identity always saves himself the trouble of proving it;
he maintains that an identity, if true, can be verified in a few lines by
anybody obtuse enough to feel the need of verification. My object in the
following pages is to confute this assertion". It is left to reader to decide
if the author of the present paper is demonstrating his own obtuseness..}%
\label{garsiarodemich}Let $1<p<\infty,$ and let $Q_{0}=I=[0,1].$ Then, as sets%
\[
GaRo_{p}(I)=L(p,\infty)(I).
\]

\end{theorem}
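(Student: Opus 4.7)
The plan is to prove the set equality by establishing each inclusion separately, the two sides being of rather different character.

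The easy direction $L(p,\infty)(I) \subset GaRo_p(I)$ I would handle by a truncation argument. Given $f$ with $K = \left\Vert f\right\Vert_{L(p,\infty)}^{\ast}$ and a disjoint family $\{Q_i\}$ of total measure $m$, one first passes from the double integral to $\sum_i \int_{Q_i}|f - f_{Q_i}|\,dx$ (the two quantities differ by at most a factor of $2$ by the triangle inequality). For a threshold $\lambda > 0$, write $f = f_1 + f_2$ with $|f_1| \leq \lambda$ and $f_2$ supported in $\{|f| > \lambda\}$. The contribution of $f_1$ is at most $2\lambda m$; that of $f_2$ is bounded by $2\int_0^{\lambda_f(\lambda)} f^{\ast}(s)\,ds$, which using $f^{\ast}(s) \leq K s^{-1/p}$ and $\lambda_f(\lambda) \leq (K/\lambda)^p$ is at most $c_p K^p \lambda^{-(p-1)}$. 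Minimizing $2\lambda m + c_p K^p \lambda^{-(p-1)}$ in $\lambda$ yields the required bound $C_p K m^{1/p'}$.

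The harder direction $GaRo_p(I) \subset L(p,\infty)(I)$ I would reduce to the pointwise rearrangement estimate
\[
f^{\ast\ast}(t) - f^{\ast}(t) \leq C\,t^{-1/p}, \qquad t \in (0,1).
\]
Granting this, integration of the identity $(tf^{\ast\ast})'(t) = f^{\ast}(t)$, which rearranges to $f^{\ast\ast}(t) = f^{\ast\ast}(1) + \int_t^1 (f^{\ast\ast}(s) - f^{\ast}(s))\,\frac{ds}{s}$, gives $f^{\ast\ast}(t) \leq \int_I|f|\,dx + C'p\,t^{-1/p}$, so that $f^{\ast}(t) \leq f^{\ast\ast}(t) \leq C'' t^{-1/p}$ for $t\in(0,1)$ and $f \in L(p,\infty)(I)$. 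To establish the pointwise bound, fix $t$, set $\lambda = f^{\ast}(t)$, and (working with $|f|$ after splitting into positive and negative parts) decompose the essential super-level set $E = \{|f|>\lambda\}$, which can be taken open by outer regularity, into its connected components $E = \bigsqcup_k J_k$; note $\int_E(|f|-\lambda)\,dx = t(f^{\ast\ast}(t) - f^{\ast}(t))$.

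The core step is to extract a sub-family of the $J_k$ and form disjoint enlarged intervals $\tilde J_k \supset J_k$ with $|\tilde J_k|$ comparable to $|J_k|$ and with $|\tilde J_k \setminus J_k|$ a definite fraction of $|J_k|$ lying in $E^c$ (where $|f|\leq \lambda$). The added region dilutes the mean, giving $f_{\tilde J_k} \leq \theta f_{J_k} + (1-\theta)\lambda$ for a universal $\theta<1$, and consequently $\int_{\tilde J_k}|f - f_{\tilde J_k}|\,dx \geq |J_k|(f_{J_k} - f_{\tilde J_k}) \geq c\int_{J_k}(|f| - \lambda)\,dx$. Summing, applying the $GaRo_p$ hypothesis to $\{\tilde J_k\}$, and combining with a Vitali-type covering estimate $\sum_{\text{selected}}\int_{J_k}(|f|-\lambda) \geq c'\int_E(|f|-\lambda)$ then yields $t(f^{\ast\ast}(t) - f^{\ast}(t)) \leq C C_0\,t^{1/p'}$. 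The principal obstacle lies in the enlargement/selection step, since two adjacent components of $E$ may be separated by too small a gap to permit simultaneous enlargement; one must therefore carefully allocate the available gap-space, retaining only components with enough room on both sides and absorbing the ``bad'' clustered components at constant cost. This manoeuvre is genuinely one-dimensional, and this is precisely why the $n$-dimensional generalization treated in the rest of the paper requires a different framework.
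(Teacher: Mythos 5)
Your easy direction $L(p,\infty)(I)\subset GaRo_{p}(I)$ is correct (and can even be shortened: one does not need the truncation $f=f_{1}+f_{2}$ and the optimization in $\lambda$, since $\sum_{i}\frac{1}{|Q_{i}|}\int_{Q_{i}}\int_{Q_{i}}|f(x)-f(y)|\,dxdy\leq2\int_{\cup Q_{i}}|f|\leq2\int_{0}^{m}f^{\ast}(s)\,ds\leq\frac{2p}{p-1}\|f\|_{L(p,\infty)}^{\ast}m^{1/p^{\prime}}$ directly, which is the route taken in the paper). Your reduction of the hard direction to the bound $t^{1/p}\left(f^{\ast\ast}(t)-f^{\ast}(t)\right)\leq C$, and the identity $t\left(f^{\ast\ast}(t)-f^{\ast}(t)\right)=\int_{E}\left(|f|-f^{\ast}(t)\right)dx$ with $E=\{|f|>f^{\ast}(t)\}$, are also correct and coincide with what is done here.

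The gap is in the ``core step.'' You propose to cover $E$ by its connected components $J_{k}$ and then enlarge a selected subfamily into disjoint intervals $\tilde{J}_{k}$ with $|\tilde{J}_{k}\setminus J_{k}|\geq c|J_{k}|$ contained in $E^{c}$; you flag the clustering obstruction but the proposed remedy (``retaining only components with enough room'' and ``absorbing the bad clustered components at constant cost'') does not close it, and in fact the per-component scheme breaks down. Take $E$ to consist of $N$ components of length $t/N$ packed into an interval of length $(1+\varepsilon)t$: no component can be enlarged by a fixed fraction of its own length inside $E^{c}$ without the enlargements overlapping massively, so almost all components must be discarded --- but since $f$ may concentrate all of its excess mass $\int_{J_{k}}(|f|-\lambda)$ on precisely those components, the selected subfamily need not capture a definite fraction of $\int_{E}(|f|-\lambda)$. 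Merging a cluster into its convex hull does not help either: the hull is then a $\frac{1}{1+\varepsilon}$-fraction inside $E$, so the dilution factor $1-\theta$ degenerates as $\varepsilon\to0$, and the internal gaps additionally contribute a term $-\lambda\cdot|\mathrm{gaps}|$ that can swamp the excess mass. What is needed is a covering at a different granularity, not tied to the components of $E$ at all: pass to an open set $\Omega\supset E$ with $|\Omega|\leq2t\leq\frac{1}{2}|Q_{0}|$ and run a Calder\'{o}n--Zygmund stopping-time argument (this is \cite[Lemma 7.2, p.~377]{bs}, used in the proof of Theorem \ref{teomarkao}(ii)) to produce intervals $Q_{i}$ with pairwise disjoint interiors such that $\Omega\subset\bigcup Q_{i}$, $\sum|Q_{i}|\leq2^{n+1}|\Omega|$, and, crucially, $|\Omega\cap Q_{i}|\leq\frac{1}{2}|Q_{i}|\leq|\Omega^{c}\cap Q_{i}|$ for every $i$. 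Each $Q_{i}$ then automatically has a definite fraction of its mass in $E^{c}$ (where $f\leq f^{\ast}(t)$), no selection or enlargement is required, the packed configuration above is handled by a single interval of length about $2t$, and the dilution estimate you describe goes through verbatim (after the two-term split that keeps track of the signs of $f_{Q_{i}}-f^{\ast}(t)$). Until the covering step is replaced by such a stopping-time construction, the proof is incomplete at its decisive point.
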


\begin{remark}
The elementary proof of the embedding $L(p,\infty)\subset GaRo_{p}$ outlined
in \cite{garro} works in $n-$dimensions and actually shows that (cf. Theorem
\ref{teomarkao} part (ii), below)%
\begin{equation}
GaRo_{p}(f,I)\leq\frac{p}{p-1}2\left\Vert f\right\Vert _{L(p,\infty)(I)}%
^{\ast}. \label{latelma}%
\end{equation}

\end{remark}

By Theorem\footnote{The fact that the containment is strict was shown in
\cite{aalto}.} \ref{teodejn} we have
\[
JN_{p}(I)\subsetneq L(p,\infty)(I),
\]
therefore by Theorem \ref{garsiarodemich} (cf. Section \ref{sec:garro} below
for a direct proof of the $n$ dimensional case) it follows that
\begin{equation}
JN_{p}(I)\subsetneq GaRo_{p}(I). \label{inmediata}%
\end{equation}
In conclusion, Theorem \ref{garsiarodemich} not only improves on Theorem
\ref{teodejn} in the one dimensional case, but also gives us an interesting
characterization of the Marcinkiewicz $L(p,\infty)(I)$ spaces$,1<p<\infty.$
Unfortunately, one part of the proof of Theorem \ref{garsiarodemich} uses a
non-trivial rearrangement inequality, also due to Garsia-Rodemich
\cite{garro}, which is only proved there in the one dimensional\footnote{See
also \cite{mamiast} for related inequalities.} case.

In \cite{garro}, the authors briefly suggest a possible different method to
prove Theorem \ref{garsiarodemich} in $n-$dimensions, and without dimensional
constants, but no details are provided\footnote{From \cite[page 115]{garro}:
\textquotedblleft We wish to point out also that using the Martingale
techniques of \cite{garsia} a proof of Theorem \ref{garsiarodemich} can be
obtained quite directly and without dimensional constants." We hope to follow
up this suggestion elsewhere.}. In this note we give a new proof Theorem
\ref{garsiarodemich} that is valid in $n$ dimensions (cf. Theorem
\ref{teomarkao} below) . Our approach is different from the one given in
\cite{garro}, and does not use martingale techniques. Instead, our method is
ultimately based on Calder\'{o}n-Zygmund type decompositions, following
classical ideas\footnote{It has the drawback of containing $n-$dimensional
constants.} in \cite{bds}.

As we shall see (cf. Section \ref{sec:garro}) the verification that the
John-Nirenberg conditions are stronger than the Garsia-Rodemich conditions
(e.g. (\ref{inmediata})) is immediate. Therefore, the crucial aspect of this
approach to the John-Nirenberg theorem is the fact that the Garsia-Rodemich
spaces are the same as the Marcinkiewicz $L(p,\infty)$ spaces! This clarifies
the self improvement results of John-Nirenberg. Moreover, these ideas could
potentially be useful in the investigation of related issues, e.g. the
dimensional constants involved in the John-Nirenberg embeddings (cf.
\cite{cwikel}).

Now the classical definitions of the $L(p,\infty)$ spaces are given in terms
of growth conditions on rearrangements or distribution functions (cf.
\cite{bs}, \cite{calderon}, \cite{oklander}, \cite{cwikelnilsson}, \cite{pus},
etc.). The case $p=\infty,$ which corresponds to $L(\infty,\infty)$ ("the
rearrangement invariant hull of $BMO$", cf. \cite{bds}), also admits a similar
characterization through the use of the oscillation operator $f^{\ast\ast
}-f^{\ast},$ and indeed one can find a characterization of all the
$L(p,\infty)$ spaces, $p\in(1,\infty],$ in the same fashion, namely%
\[
\left\Vert f\right\Vert _{L(p,\infty)}^{\#}=\sup_{s}\{(f^{\ast\ast}%
(s)-f^{\ast}(s))s^{1/p}\}<\infty.
\]
This characterization, while extremely useful in many problems (cf. \cite{bs},
\cite{corita}) is not always easy to implement, and does not reflect
immediately the self improvement\footnote{Note however that $(f^{\ast\ast
}(t)-f^{\ast}(t))=t\frac{d}{dt}(-f^{\ast\ast}(t)).$} of the Garsia-Rodemich
construction. In this direction, we found a different characterization of
$L(\infty,\infty),$ which gives an implicit differential inequality reflecting
the exponential decay of the distribution function of elements of
$L(\infty,\infty),$ via the use of distribution functions (cf. Section
\ref{sec:marcin} below)

\begin{theorem}
\label{teoweakinfty}Let $\left(  \Omega,\mu\right)  $ be a measure space.
Then, $f\in L(\infty,\infty):=L(\infty,\infty)\left(  \Omega\right)  $ if and
only if there exists $C>0$ such that for all $t>0,$
\begin{equation}
\int_{t}^{\infty}\lambda_{f}(s)ds\leq C\lambda_{f}(t), \label{form1}%
\end{equation}
and
\[
\left\Vert f\right\Vert _{L(\infty,\infty)}^{\#\#}:=\inf\{C:\text{such that
(\ref{form1}) holds\}}=\left\Vert f\right\Vert _{L(\infty,\infty)}.
\]

\end{theorem}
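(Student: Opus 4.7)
The plan is to reduce both implications to a single identity that expresses the tail integral of the distribution function in terms of the oscillation of $f^{\ast\ast}$. By the layer-cake formula (or by equimeasurability of $(|f|-t)_+$ and $(f^\ast-t)_+$) together with the equivalence $f^\ast(r)>t \Leftrightarrow r<\lambda_f(t)$, one arrives at
\[
\int_t^\infty \lambda_f(s)\,ds = \int_\Omega (|f|-t)_+\,d\mu = \int_0^{\lambda_f(t)} (f^\ast(r)-t)\,dr = \lambda_f(t)\bigl(f^{\ast\ast}(\lambda_f(t))-t\bigr). \quad (\ast)
\]
This identity is the engine of the proof and bypasses any delicate discussion of the inverse of $\lambda_f$.

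For the forward direction, right-continuity and monotonicity of $f^\ast$ force $f^\ast(\lambda_f(t))\leq t$ (since $f^\ast(\lambda_f(t))>t$ would contradict the equivalence above). Feeding this into $(\ast)$ gives
\[
\int_t^\infty \lambda_f(s)\,ds \leq \lambda_f(t)\bigl(f^{\ast\ast}(\lambda_f(t))-f^\ast(\lambda_f(t))\bigr) \leq \lambda_f(t)\,\|f\|_{L(\infty,\infty)},
\]
so that $\|f\|_{L(\infty,\infty)}^{\#\#}\leq \|f\|_{L(\infty,\infty)}$.

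For the converse, given $s>0$ I would apply $(\ast)$ at $t=f^\ast(s)$. Setting $u:=\lambda_f(t)$, one has $u\leq s$, and moreover $f^\ast(u)\leq t=f^\ast(s)$ combined with the monotonicity estimate $f^\ast(r)\geq f^\ast(s)=t$ for $r\leq s$ forces $f^\ast\equiv t$ on $[u,s]$. Splitting the average then yields the clean identity
\[
f^{\ast\ast}(s)-f^\ast(s)=\frac{1}{s}\bigl[u\,f^{\ast\ast}(u)+(s-u)t\bigr]-t=\frac{u}{s}\bigl(f^{\ast\ast}(u)-t\bigr),
\]
and combining this with $(\ast)$ and the hypothesis $\int_t^\infty \lambda_f\leq C\lambda_f(t)=Cu$ produces $f^{\ast\ast}(s)-f^\ast(s)\leq Cu/s\leq C$. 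Taking the supremum in $s$ gives $\|f\|_{L(\infty,\infty)}\leq \|f\|_{L(\infty,\infty)}^{\#\#}$.

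The main technical obstacle is that the distribution function $\lambda_f$ and the rearrangement $f^\ast$ are only pseudo-inverses: jumps of $\lambda_f$ correspond to intervals of constancy of $f^\ast$, so one does \emph{not} have $\lambda_f(f^\ast(s))=s$ in general, only $\leq$. The identity $(\ast)$ evades this by anchoring the computation at the endpoint $\lambda_f(t)$ rather than at $s$; and in the converse step the factor $u/s\leq 1$ absorbs precisely the possible gap $s-\lambda_f(f^\ast(s))$, allowing one to recover $\|f\|_{L(\infty,\infty)}$ without loss.
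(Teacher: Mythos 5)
Your proof is correct and follows essentially the same route as the paper: both arguments turn on the identity relating the tail integral of $\lambda_f$ to the oscillation $t\left(f^{\ast\ast}(t)-f^{\ast}(t)\right)$, together with the pseudo-inverse inequalities $f^{\ast}(\lambda_f(t))\leq t$ and $\lambda_f(f^{\ast}(t))\leq t$. The only difference is that the paper invokes the standard identity $t\left(f^{\ast\ast}(t)-f^{\ast}(t)\right)=\int_{f^{\ast}(t)}^{\infty}\lambda_f(s)\,ds$ as a known fact, whereas you derive the equivalent identity $(\ast)$ from the layer-cake formula and make the constancy of $f^{\ast}$ on $[\lambda_f(f^{\ast}(s)),s]$ explicit.
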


This characterization gives immediately the exponential integrability of
functions in $L(\infty,\infty),$ via the implicit differential inequality
(\ref{form1}). In fact, it is also welcome that there is similar
characterization for all $L(p,\infty)$ spaces, $1<p<\infty.$

\begin{theorem}
\label{teoweakp}Let $1<p<\infty.$ Let $\left(  \Omega,\mu\right)  $ be a
measure space. Then,%
\begin{align}
L(p,\infty)  &  :=L(p,\infty)\left(  \Omega\right)  =\{f\in L_{loc}^{1}\left(
\Omega\right)  :\left\Vert f\right\Vert _{L(p,\infty)}^{\ast}=\sup
_{s}\{f^{\ast}(s)s^{1/p}\}<\infty\}\label{lalapa}\\
&  =\{f\in L_{loc}^{1}\left(  \Omega\right)  :\left\Vert f\right\Vert
_{L(p,\infty)}=\sup_{s}\{f^{\ast\ast}(s)s^{1/p}\}<\infty\},\nonumber
\end{align}
coincides with the set of all $f$ such that $f^{\ast\ast}(\infty)=0,$ and%
\[
\left\Vert f\right\Vert _{L(p,\infty)}^{\#}=\sup_{s>0}\{(f^{\ast\ast
}(s)-f^{\ast}(s))s^{1/p}\}<\infty,
\]
which in turn coincides with the set of all $f$ such that $f^{\ast\ast}%
(\infty)=0,$ and%
\begin{equation}
\left\Vert f\right\Vert _{L(p,\infty)}^{\#\#}=\sup_{t>0}\{\frac{1}{\left(
\lambda_{f}(t)\right)  ^{1-1/p}}\int_{t}^{\infty}\lambda_{f}(s)ds\}<\infty.
\label{demostracion}%
\end{equation}

\end{theorem}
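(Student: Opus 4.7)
The plan is to establish the three characterizations by chaining three equivalences, the first two classical and the third constituting the main new content.

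The equivalence of the two quasi-norms appearing in (\ref{lalapa}), namely $\|f\|^{\ast}_{L(p,\infty)}=\sup_s s^{1/p}f^{\ast}(s)$ and $\|f\|_{L(p,\infty)}=\sup_s s^{1/p}f^{\ast\ast}(s)$, is the standard Hardy-type equivalence for $p>1$, with ratio at most $p/(p-1)$, which I would simply cite from \cite{bs}. For the next equivalence $\|f\|_{L(p,\infty)}\sim \|f\|^{\#}_{L(p,\infty)}$, the estimate $\|f\|^{\#}\leq \|f\|_{L(p,\infty)}$ is immediate from $f^{\ast\ast}-f^{\ast}\leq f^{\ast\ast}$. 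The reverse direction uses the elementary identity
\[
f^{\ast\ast}(s)-f^{\ast}(s) = -s\,\frac{d}{ds}f^{\ast\ast}(s),
\]
which converts $\sup_{s}s^{1/p}(f^{\ast\ast}(s)-f^{\ast}(s))\leq A$ into $-(f^{\ast\ast})'(s)\leq A s^{-1-1/p}$. Integrating on $[s,\infty)$ and invoking $f^{\ast\ast}(\infty)=0$ produces $f^{\ast\ast}(s)\leq pA\,s^{-1/p}$, hence $\|f\|_{L(p,\infty)}\leq p\|f\|^{\#}$.

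The main new content is the identity $\|f\|^{\#}_{L(p,\infty)} = \|f\|^{\#\#}_{L(p,\infty)}$, and everything rests on the representation
\[
\sigma\bigl(f^{\ast\ast}(\sigma)-f^{\ast}(\sigma)\bigr) = \int_{f^{\ast}(\sigma)}^{\infty}\lambda_{f}(u)\,du, \qquad \sigma>0.
\]
I would derive this by layer-cake: write $\int_{0}^{\sigma} f^{\ast}(s)\,ds = \int_{0}^{\infty}\min(\sigma,\lambda_{f}(u))\,du$, and split the outer integral at $u=f^{\ast}(\sigma)$ using the defining relation $\{u\geq 0:\lambda_{f}(u)\leq\sigma\}=[f^{\ast}(\sigma),\infty)$. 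This collapses to $\sigma f^{\ast\ast}(\sigma)=\sigma f^{\ast}(\sigma)+\int_{f^{\ast}(\sigma)}^{\infty}\lambda_{f}(u)\,du$.

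With this representation in hand, the two standard one-sided inequalities $f^{\ast}(\lambda_{f}(t))\leq t$ and $\lambda_{f}(f^{\ast}(\sigma))\leq\sigma$ close the argument. Setting $t=f^{\ast}(\sigma)$, the second gives $\lambda_{f}(t)^{-(1-1/p)}\geq \sigma^{-(1-1/p)}$, whence
\[
\sigma^{1/p}\bigl(f^{\ast\ast}(\sigma)-f^{\ast}(\sigma)\bigr) = \sigma^{-(1-1/p)}\int_{f^{\ast}(\sigma)}^{\infty}\lambda_{f}(u)\,du \leq \frac{1}{\lambda_{f}(t)^{1-1/p}}\int_{t}^{\infty}\lambda_{f}(u)\,du \leq \|f\|^{\#\#}_{L(p,\infty)};
\]
symmetrically, setting $\sigma=\lambda_{f}(t)$, the first inequality yields $\int_{f^{\ast}(\sigma)}^{\infty}\lambda_{f}(u)\,du\geq\int_{t}^{\infty}\lambda_{f}(u)\,du$, giving $\|f\|^{\#\#}\leq \|f\|^{\#}$. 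The main obstacle I anticipate is the bookkeeping around atoms of $\lambda_{f}$ and plateaus of $f^{\ast}$, where the two parametrizations $t\leftrightarrow\sigma$ genuinely fail to be mutually inverse; the layer-cake derivation of the identity is insensitive to this pathology, and the slack is absorbed exactly by the two one-sided inequalities, delivering equality of the norms rather than mere equivalence.
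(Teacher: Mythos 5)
Your proposal is correct and follows essentially the same route as the paper: the Hardy-type bound $\left\Vert f\right\Vert _{L(p,\infty)}\leq p\left\Vert f\right\Vert _{L(p,\infty)}^{\#}$ via the identity $f^{\ast\ast}(s)-f^{\ast}(s)=-s\frac{d}{ds}f^{\ast\ast}(s)$ together with $f^{\ast\ast}(\infty)=0$, and then the equality $\left\Vert f\right\Vert _{L(p,\infty)}^{\#}=\left\Vert f\right\Vert _{L(p,\infty)}^{\#\#}$ from the representation $\sigma\left(  f^{\ast\ast}(\sigma)-f^{\ast}(\sigma)\right)  =\int_{f^{\ast}(\sigma)}^{\infty}\lambda_{f}(u)du$ combined with the one-sided inequalities $f^{\ast}(\lambda_{f}(t))\leq t$ and $\lambda_{f}(f^{\ast}(\sigma))\leq\sigma$, exactly as in the paper. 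The only difference is cosmetic: you supply a layer-cake derivation of the representation formula, which the paper takes as known.
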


If one combines (\ref{demostracion}) with the usual definition of the spaces
$L(p,\infty)$ (cf. (\ref{lalapa})), one readily obtains a known
characterization of the $L(p,\infty)$ spaces which was apparently first given
by O'Neil \cite{oneil}.

\begin{corollary}
\label{elcorooneil}Let $1<p<\infty,$ then%
\begin{equation}
\left\Vert f\right\Vert _{L(p,\infty)}^{\ast}\sim\inf\{C^{1/p}:\int
_{t}^{\infty}\lambda_{f}(s)ds\leq Ct^{1-p}\}. \label{formaoneil}%
\end{equation}

\end{corollary}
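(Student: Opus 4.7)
The plan is to prove $\|f\|_{L(p,\infty)}^{\ast}\sim B$, where I abbreviate
\[
B:=\inf\{C^{1/p}:\int_{t}^{\infty}\lambda_{f}(s)\,ds\leq Ct^{1-p}\text{ for all }t>0\},
\]
by treating the two directions separately, exactly following the hint in the corollary's preamble about \emph{combining} (\ref{demostracion}) with (\ref{lalapa}).

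For the direction $B\lesssim\|f\|_{L(p,\infty)}^{\ast}$, I would exploit Theorem \ref{teoweakp}. The $\#\#$-norm identity (\ref{demostracion}) gives, for every $t>0$,
\[
\int_{t}^{\infty}\lambda_{f}(s)\,ds\leq \|f\|_{L(p,\infty)}^{\#\#}\,\lambda_{f}(t)^{1-1/p}.
\]
Meanwhile (\ref{lalapa}) (in the distribution-function form of (\ref{verarriba}), namely $t\,\lambda_{f}(t)^{1/p}\leq\|f\|_{L(p,\infty)}^{\ast}$) yields the elementary inequality $\lambda_{f}(t)^{1-1/p}\leq(\|f\|_{L(p,\infty)}^{\ast})^{p-1}t^{1-p}$. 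Substituting, I obtain
\[
\int_{t}^{\infty}\lambda_{f}(s)\,ds\leq \|f\|_{L(p,\infty)}^{\#\#}(\|f\|_{L(p,\infty)}^{\ast})^{p-1}\,t^{1-p},
\]
and since Theorem \ref{teoweakp} guarantees $\|f\|_{L(p,\infty)}^{\#\#}\sim\|f\|_{L(p,\infty)}^{\ast}$, the right-hand side is $\lesssim(\|f\|_{L(p,\infty)}^{\ast})^{p}\,t^{1-p}$. This shows $B\lesssim\|f\|_{L(p,\infty)}^{\ast}$ with an explicit constant depending only on $p$.

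For the reverse direction $\|f\|_{L(p,\infty)}^{\ast}\lesssim B$, no deep machinery is needed; just monotonicity of the distribution function. Assuming $\int_{t}^{\infty}\lambda_{f}(s)\,ds\leq Ct^{1-p}$, I would compare
\[
t\,\lambda_{f}(2t)\leq\int_{t}^{2t}\lambda_{f}(s)\,ds\leq\int_{t}^{\infty}\lambda_{f}(s)\,ds\leq Ct^{1-p},
\]
so that $\lambda_{f}(2t)\leq Ct^{-p}$; rewritten in the variable $u=2t$ this becomes $u\,\lambda_{f}(u)^{1/p}\leq 2C^{1/p}$ for every $u>0$. Taking the supremum gives $\|f\|_{L(p,\infty)}^{\ast}\leq 2C^{1/p}$, and passing to the infimum over admissible $C$ yields $\|f\|_{L(p,\infty)}^{\ast}\leq 2B$.

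There is no real obstacle here: the first direction is where the content of the paper actually enters (through Theorem \ref{teoweakp}), and the second is a one-line monotonicity argument. The whole point of presenting this as a corollary is precisely that Theorem \ref{teoweakp} absorbs all the work, leaving only the conversion between the ``intrinsic'' $\#\#$-condition $\int_{t}^{\infty}\lambda_{f}\leq C\lambda_{f}(t)^{1-1/p}$ and O'Neil's ``extrinsic'' form $\int_{t}^{\infty}\lambda_{f}\leq Ct^{1-p}$.
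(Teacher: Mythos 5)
Your proposal is correct and follows essentially the same route as the paper's own proof (a lemma in Section \ref{sec:marcin}): the direction $B\lesssim\left\Vert f\right\Vert _{L(p,\infty)}^{\ast}$ is obtained exactly as there, by feeding the pointwise bound $\lambda_{f}(t)\leq(\left\Vert f\right\Vert _{L(p,\infty)}^{\ast})^{p}t^{-p}$ into the $\#\#$-estimate of Theorem \ref{teoweakp}. Your converse, integrating $\lambda_{f}$ over $[t,2t]$ and using monotonicity, is a slightly cleaner version of the paper's argument (which integrates $\lambda_{f}^{1/p}$ over $[t/2,t]$ and then rearranges), but it rests on the same idea and yields the same conclusion $\lambda_{f}(t)\lesssim t^{-p}$.
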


\begin{remark}
One difference between (\ref{demostracion}) and (\ref{formaoneil}) is given by
the fact that the former also works in the case $p=\infty.$ Both formulations
can be extended to more general Marcinkiewicz spaces, $M_{\phi}$, where $\phi$
is a concave function. In particular, we refer to \cite{oneil} for the
corresponding theory of generalized Marcinkiewicz spaces $M_{\phi}$ defined
via (\ref{formaoneil})$.$
\end{remark}

In his expansive work \cite{oneil}, O'Neil used the formulae (\ref{formaoneil}%
) to study tensor products of $L(p,q)$ spaces (cf. also \cite{astashkin} and
\cite{tensor}). The space $L(\infty,\infty)$ was introduced later (cf.
\cite{bds}), and consequently was not considered in \cite{oneil}. In the last
section of this paper we give an application of (\ref{form1}) to show that
(cf. Theorem \ref{teotensorial} in Section \ref{sec:final} below)%
\begin{equation}
L(\infty,\infty)(\Omega_{1})\otimes L^{\infty}(\Omega_{2})\subset
L(\infty,\infty)(\Omega_{1}\times\Omega_{2}). \label{laidea}%
\end{equation}
While we think that (\ref{laidea}) could be useful in establishing other
embeddings of tensor products involving $L(\infty,\infty)$, such an
undertaking falls outside the scope of this note.

In conclusion, we should mention that this paper is part of series of papers
by the author on $BMO$, self improvement and interpolation, that go back at
least to \cite{lund}, \cite{fenicae1}, \cite{fenicae2}, with the most recent
opus being \cite{corita}, to which we refer for background information and
further references.

\section{John-Nirenberg spaces and Garsia-Rodemich spaces\label{sec:garro}}

It is easy to see the connection of the John-Nirenberg spaces with $BMO.$ Fix
a cube $Q_{0}\subset R^{n}$ and let%
\[
\left\Vert f\right\Vert _{BMO(Q_{0})}=\sup\{\frac{1}{\left\vert Q\right\vert
}\int_{Q}\left\vert f-f_{Q}\right\vert dx:Q\text{ subcube of }Q_{0}\}.
\]
Then, for $1\leq p<\infty$
\[
JN_{p}(f,Q_{0})\leq\left\Vert f\right\Vert _{BMO(Q_{0})}\left\vert
Q_{0}\right\vert ^{1/p}.
\]
Indeed, if $\{Q_{i}\}_{i\in N}\in P(Q_{0}),$ then we clearly have%
\begin{align*}
\left\{
{\displaystyle\sum\limits_{i}}
\left\vert Q_{i}\right\vert \left(  \frac{1}{\left\vert Q_{i}\right\vert }%
\int_{Q_{i}}\left\vert f-f_{Q_{i}}\right\vert dx\right)  ^{p}\right\}  ^{1/p}
&  \leq\left\{
{\displaystyle\sum\limits_{i}}
\left\vert Q_{i}\right\vert \left(  \left\Vert f\right\Vert _{BMO(Q_{0}%
)}\right)  ^{p}\right\}  ^{1/p}\\
&  \leq\left\Vert f\right\Vert _{BMO(Q_{0})}\left\vert Q_{0}\right\vert
^{1/p}.
\end{align*}

The purpose of this section is to prove the following

\begin{theorem}
\label{teomarkao}Let $1<p<\infty,$ and let $Q_{0}\subset R^{n}$ be a fixed
cube. Then

(i) $JN_{p}(Q_{0})\subset GaRo_{p}(Q_{0}),$ in fact%
\begin{equation}
GaRo_{p}(f,Q_{0})\leq2JN_{p}(f,Q_{0}). \label{laquesigue}%
\end{equation}

(ii) $GaRo_{p}(Q_{0})=L(p,\infty)(Q_{0}),$ in fact we have%
\begin{align*}
GaRo_{p}(f,Q_{0})  &  \leq\frac{2p}{p-1}\left\Vert f\right\Vert _{L(p,\infty
)}^{\ast},\text{ }\\
\sup_{t}t^{1/p}\left(  f^{\ast\ast}(t)-f^{\ast}(t)\right)   &  \leq
2^{n/p^{\prime}+1}GaRo_{p}(f,Q_{0})+\left(  \frac{4}{\left\vert Q_{0}%
\right\vert }\right)  ^{1/p^{\prime}}\left\Vert f\right\Vert _{L^{1}}.
\end{align*}

\end{theorem}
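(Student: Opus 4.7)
\medskip
\noindent\textbf{Proof plan for Theorem \ref{teomarkao}.}

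Part (i) is essentially a routine computation. Given a family $\{Q_i\}\in P(Q_{0})$, the plan is to apply the triangle inequality inside each double integral,
\[
|f(x)-f(y)|\leq |f(x)-f_{Q_i}|+|f(y)-f_{Q_i}|,
\]
so that Fubini collapses the double integral into twice $|Q_i|\int_{Q_i}|f-f_{Q_i}|\,dx$. Factoring $|Q_i|=|Q_i|^{1/p}|Q_i|^{1/p'}$ and invoking H\"older's inequality in the counting measure then yields
\[
\sum_i \int_{Q_i}|f-f_{Q_i}|\,dx \leq \Big(\sum_i |Q_i|\big(\tfrac{1}{|Q_i|}\!\int_{Q_i}|f-f_{Q_i}|\,dx\big)^{p}\Big)^{1/p}\Big(\sum_i|Q_i|\Big)^{1/p'},
\]
which is exactly $JN_p(f,Q_0)(\sum_i|Q_i|)^{1/p'}$, giving (\ref{laquesigue}).

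For the easy half of part (ii), i.e.\ $L(p,\infty)\subset GaRo_p$, the plan is to replace $|f(x)-f_{Q_i}|+|f(y)-f_{Q_i}|$ by the cruder $|f(x)|+|f(y)|$ to get $\sum_i\frac{1}{|Q_i|}\int_{Q_i}\!\!\int_{Q_i}|f(x)-f(y)|\,dx\,dy\le 2\int_{\cup Q_i}|f|\,dx$, then use the Hardy--Littlewood rearrangement inequality $\int_E|f|\le\int_0^{|E|}f^{*}(s)\,ds$ and the pointwise bound $f^{*}(s)\le \|f\|_{L(p,\infty)}^{*}s^{-1/p}$ to convert $\int_0^{S}s^{-1/p}ds=\frac{p}{p-1}S^{1/p'}$ with $S=\sum_i|Q_i|$. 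This yields the claimed constant $\frac{2p}{p-1}$.

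The hard direction $GaRo_p\subset L(p,\infty)$ is the heart of the theorem. The starting point is the identity
\[
t\bigl(f^{**}(t)-f^{*}(t)\bigr)=\int_{\{|f|>f^{*}(t)\}}\!(|f|-f^{*}(t))\,dx,
\]
proved by a layer-cake computation. Fix $t$ and split into the trivial regime $t\ge |Q_0|/4$, where $f^{**}(t)\le \|f\|_{L^{1}}/t$ gives directly $t^{1/p}(f^{**}(t)-f^{*}(t))\le (4/|Q_0|)^{1/p'}\|f\|_{L^{1}}$, and the nontrivial regime $t<|Q_0|/4$. In the latter, one performs a Calder\'on--Zygmund decomposition of $Q_0$ at a level $\lambda$ tied to $f^{*}(t)$ (or $f^{**}(t)$), producing disjoint stopping cubes $\{Q_i\}$ with $\lambda<|f|_{Q_i}\le 2^{n}\lambda$, whose dyadic parents $\tilde Q_i$ satisfy $|f|_{\tilde Q_i}\le \lambda$ and $|\tilde Q_i|=2^{n}|Q_i|$. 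The key pointwise estimate, for $x\in Q_i$, is
\[
(|f(x)|-\lambda)_{+}\le |f(x)-f_{\tilde Q_i}|,
\]
which holds because $|f_{\tilde Q_i}|\le \lambda$ makes the constant term nonpositive. Integrating and summing,
\[
\int_{\{|f|>\lambda\}}(|f|-\lambda)\,dx\le \sum_i\int_{\tilde Q_i}|f-f_{\tilde Q_i}|\,dx\le \sum_i\tfrac{1}{|\tilde Q_i|}\!\int_{\tilde Q_i}\!\!\int_{\tilde Q_i}\!|f(x)-f(y)|\,dx\,dy.
\]
Applying the $GaRo_p$ condition to a disjoint re-selection of the family of parents, together with $\sum_i|\tilde Q_i|\le 2^{n}\sum_i|Q_i|$, produces the factor $2^{n/p'}$ from $(\sum|\tilde Q_i|)^{1/p'}\le 2^{n/p'}(\sum|Q_i|)^{1/p'}$, while a further factor of $2$ accounts for the overlap/triangle step; this is where $2^{n/p'+1}$ comes from. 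Combining with $\sum_i|Q_i|\le t$ (enforced by picking $\lambda\ge f^{**}(t)$, which bounds the dyadic maximal set measure by $t$ via $\lambda<f^{**}(\sum|Q_i|)$) finishes the estimate after multiplying by $t^{1/p-1}$.

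The main obstacle is the bookkeeping at the C--Z step. Two issues must be handled simultaneously: (a)~the parent cubes $\{\tilde Q_i\}$ need not form a partition, so one must either pass to a maximal disjoint subfamily of dyadic ancestors (absorbing at most the multiplicity $2^{n}$) or work directly with the double-integral form of $GaRo_p$ on the parents; and (b)~the C--Z level $\lambda$ must be chosen so that $\sum|Q_i|\lesssim t$ (otherwise the bound $(\sum|Q_i|)^{1/p'}$ is too large), while also keeping $\lambda\le f^{*}(t)$ so that the identity for $t(f^{**}(t)-f^{*}(t))$ is controlled by $\int_{\{|f|>\lambda\}}(|f|-\lambda)\,dx$. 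Balancing these two constraints, together with the initial condition $|f|_{Q_0}\le \lambda$ required to start C--Z (which fails precisely when $\|f\|_{L^{1}}/|Q_0|$ is dominant, accounting for the second term in the estimate), is the delicate point that forces the explicit constants in the theorem.
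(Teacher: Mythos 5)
Your treatment of part (i) and of the inclusion $L(p,\infty)(Q_{0})\subset GaRo_{p}(Q_{0})$ is correct and essentially identical to the paper's. The problem is the hard inclusion $GaRo_{p}(Q_{0})\subset L(p,\infty)(Q_{0})$, where your Calder\'on--Zygmund plan does not close as written. The decisive obstruction is the one you yourself label (b): you need the stopping level $\lambda$ to satisfy $\lambda\geq f^{\ast\ast}(t)$ (so that $\sum_{i}|Q_{i}|\leq t$) and simultaneously $\lambda\leq f^{\ast}(t)$ (so that $\int_{\{|f|>\lambda\}}(|f|-\lambda)\,dx=\int_{\lambda}^{\infty}\lambda_{f}(s)\,ds$ dominates $t(f^{\ast\ast}(t)-f^{\ast}(t))=\int_{f^{\ast}(t)}^{\infty}\lambda_{f}(s)\,ds$). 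Since $f^{\ast\ast}\geq f^{\ast}$ pointwise, no such $\lambda$ exists except where $f^{\ast\ast}(t)=f^{\ast}(t)$; and if you take $\lambda=f^{\ast}(t)$, then $\sum_{i}|Q_{i}|$ is only controlled by the generalized inverse of $f^{\ast\ast}$ at $f^{\ast}(t)$, which can be of order $|Q_{0}|$ rather than $t$. Calling this ``the delicate point that forces the constants'' understates it: it is a contradiction in the scheme, not a bookkeeping issue. A secondary defect is the claim in (a) that the parent cubes have overlap multiplicity at most $2^{n}$: parents can form arbitrarily long nested chains (in $[0,1]$ take stopping intervals $[0,\tfrac12],[\tfrac12,\tfrac58],[\tfrac58,\tfrac{21}{32}],\dots$, whose parents $[0,1]\supset[\tfrac12,\tfrac34]\supset[\tfrac58,\tfrac{11}{16}]\supset\cdots$ all meet near $\tfrac58$), so the multiplicity is unbounded. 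This part is repairable --- pass to the maximal parents, which are pairwise disjoint, still have averages $\leq\lambda$ because they are non-stopping cubes, and have total measure $\leq 2^{n}\sum_{i}|Q_{i}|$ since each is the parent of a distinct stopping cube --- but the repair does not touch obstruction (b).

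The paper avoids both issues by not decomposing at a level of $f$ at all. It covers an open set $\Omega\supset E=\{f>f^{\ast}(t)\}$ with $|\Omega|\leq 2t$ by a single family of cubes with pairwise disjoint interiors (the covering lemma in Bennett--Sharpley) satisfying $|\Omega\cap Q_{i}|\leq\tfrac12|Q_{i}|\leq|\Omega^{c}\cap Q_{i}|$ and $\sum_{i}|Q_{i}|\leq 2^{n+1}|\Omega|$. The half-covering property replaces your parent-cube device: the term $\sum_{i}|E\cap Q_{i}|(f_{Q_{i}}-f^{\ast}(t))$ is handled by transferring the integration from $E\cap Q_{i}$ to $\Omega^{c}\cap Q_{i}$, a set of at least equal measure on which $f\leq f^{\ast}(t)$, which yields $\sum_{i}\int_{Q_{i}}|f-f_{Q_{i}}|\,dx$ over one disjoint family whose total measure is already $O(t)$; the $GaRo_{p}$ hypothesis is then invoked exactly once. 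If you insist on the C--Z route, the honest output of your computation is an inequality of the form $\int_{2^{n}f^{\ast\ast}(t)}^{\infty}\lambda_{f}(s)\,ds\leq C\,GaRo_{p}(f,Q_{0})\,t^{1/p^{\prime}}$ (obtained from $(|f|-2^{n}\lambda)_{+}\leq|f-f_{Q_{i}}|$ on $Q_{i}$, which needs no parents at all), and you would then have to convert this distribution-function inequality into the weak-type estimate via Theorem \ref{teoweakp} or Corollary \ref{elcorooneil}; that is a genuinely different, and longer, argument than the one you sketched.
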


\begin{proof}
(i). Suppose that $\{Q_{i}\}_{i\in N}\in P(Q_{0}).$ Then for all $Q_{i}$
$,i\in N,$ we have,%
\begin{align*}
\int_{Q_{i}}\int_{Q_{i}}\left\vert f(x)-f(y)\right\vert dxdy  &  \leq
\int_{Q_{i}}\int_{Q_{i}}\left\vert f(x)-f_{Q_{i}}\right\vert dxdy+\int_{Q_{i}%
}\int_{Q_{i}}\left\vert f_{Q_{i}}-f(y)\right\vert dxdy\\
&  =2\left\vert Q_{i}\right\vert \int_{Q_{i}}\left\vert f-f_{Q_{i}}\right\vert
dx.
\end{align*}
Therefore,%
\begin{align*}%
{\displaystyle\sum\limits_{i}}
\frac{1}{\left\vert Q_{i}\right\vert }\int_{Q_{i}}\int_{Q_{i}}\left\vert
f(x)-f(y)\right\vert dxdy  &  \leq2%
{\displaystyle\sum\limits_{i}}
\int_{Q_{i}}\left\vert f-f_{Q_{i}}\right\vert dx\\
&  =2%
{\displaystyle\sum\limits_{i}}
\left\vert Q_{i}\right\vert ^{1/p^{\prime}}\left\vert Q_{i}\right\vert
^{1/p}\frac{1}{\left\vert Q_{i}\right\vert }\int_{Q_{i}}\left\vert f-f_{Q_{i}%
}\right\vert dx\\
&  \leq2\left(
{\displaystyle\sum\limits_{i}}
\left\vert Q_{i}\right\vert \right)  ^{1/p^{\prime}}\left\{
{\displaystyle\sum\limits_{i}}
\left\vert Q_{i}\right\vert \left(  \frac{1}{\left\vert Q_{i}\right\vert }%
\int_{Q_{i}}\left\vert f-f_{Q_{i}}\right\vert dx\right)  ^{p}\right\}  ^{1/p},
\end{align*}
and (\ref{laquesigue}) follows.

(ii). We show first that $L(p,\infty)(Q_{0})\subset GaRo_{p}(Q_{0}).$ Let
$\{Q_{i}\}_{i\in N}\in P(Q_{0}),$ then%
\begin{align*}%
{\displaystyle\sum\limits_{i}}
\frac{1}{\left\vert Q_{i}\right\vert }\int_{Q_{i}}\int_{Q_{i}}\left\vert
f(x)-f(y)\right\vert dxdy  &  \leq%
{\displaystyle\sum\limits_{i}}
\frac{1}{\left\vert Q_{i}\right\vert }\int_{Q_{i}}\int_{Q_{i}}\left(
\left\vert f(x)\right\vert +\left\vert f(y)\right\vert \right)  dxdy\\
&  \leq2\int_{\cup Q_{i}}\left\vert f(x)\right\vert dx\\
&  \leq2\int_{0}^{\sum_{i}\left\vert Q_{i}\right\vert }f^{\ast}(t)dt\\
&  \leq2\left\Vert f\right\Vert _{L(p,\infty)}^{\ast}\int_{0}^{\sum
_{i}\left\vert Q_{i}\right\vert }t^{-1/p}dt\\
&  =\frac{2p}{p-1}\left\Vert f\right\Vert _{L(p,\infty)}^{\ast}\left(
\sum_{i}\left\vert Q_{i}\right\vert \right)  ^{1/p^{\prime}}.
\end{align*}
Consequently,%
\[
GaRo_{p}(f,Q_{0})\leq\frac{2p}{p-1}\left\Vert f\right\Vert _{L(p,\infty
)}^{\ast}.
\]
To show the remaining inclusion, $GaRo_{p}(Q_{0})\subset L(p,\infty)(Q_{0}),$
we argue as in \cite[Chapter 5]{bs}. We provide all the details for the sake
of completeness.

To show that a function $f$ belongs to $L(p,\infty)(Q_{0})$ it is equivalent
to show that $\left\vert f\right\vert \in L(p,\infty)(Q_{0}),$ therefore,
since%
\[
GaRo_{p}(\left\vert f\right\vert ,Q_{0})\leq GaRo_{p}(f,Q_{0}),
\]
to show that $f\in GaRo_{p}(Q_{0})$ belongs to $L(p,\infty)(Q_{0}),$ we can
assume without loss that $f\geq0.$ Let $f\in GaRo_{p}(Q_{0}),$ $f\geq0.$ Fix
$t>0,$ such that $t<\left\vert Q_{0}\right\vert /4,$ and let $E=\{x\in
Q_{0}:f(x)>f^{\ast}(t)\}.$ By definition, $\left\vert E\right\vert \leq
t<\left\vert Q_{0}\right\vert /4,$ consequently, we can find a relatively open
subset of $Q_{0},$ $\Omega,$ say$,$ such that $E\subset\Omega$ and $\left\vert
\Omega\right\vert \leq2t\leq\left\vert Q_{0}\right\vert /2.$ By \cite[Lemma
7.2, page 377]{bs} we can find a sequence of cubes $\{Q_{i}\}_{i\in N},$ with
pairwise disjoint interiors, such that:%
\begin{align*}
(i)\text{ \ \ }\left\vert \Omega\cap Q_{i}\right\vert  &  \leq\frac{1}%
{2}\left\vert Q_{i}\right\vert \leq\left\vert \Omega^{c}\cap Q_{i}\right\vert
,\text{ }i=1,2...\\
(ii)\text{ \ \ \ }\Omega &  \subset%
{\displaystyle\bigcup\limits_{i\in N}}
Q_{i}\subset Q_{0}\\
(iii)\text{ \ \ }\left\vert \Omega\right\vert  &  \leq%
{\displaystyle\sum\limits_{i\in N}}
\left\vert Q_{i}\right\vert \leq2^{n+1}\left\vert \Omega\right\vert .
\end{align*}
Now, to estimate $t^{1/p}\left(  f^{\ast\ast}(t)-f^{\ast}(t)\right)  $, it
will be more convenient, by homogeneity, to consider $t\left(  f^{\ast\ast
}(t)-f^{\ast}(t)\right)  $ first$.$ Then, we have%
\begin{align*}
t\left(  f^{\ast\ast}(t)-f^{\ast}(t)\right)   &  =\int_{E}\{f(x)-f^{\ast
}(t)\}dx\\
&  \leq%
{\displaystyle\sum\limits_{i\in N}}
\int_{E\cap Q_{i}}\{f(x)-f^{\ast}(t)\}dx\\
&  =%
{\displaystyle\sum\limits_{i\in N}}
\left(  \int_{E\cap Q_{i}}\{f(x)-f_{Q_{i}}\}dx+\left\vert E\cap Q_{i}%
\right\vert \{f_{Q_{i}}-f^{\ast}(t)\}\right) \\
&  \leq%
{\displaystyle\sum\limits_{i\in N}}
\left(  \int_{Q_{i}}\{f(x)-f_{Q_{i}}\}dx+\left\vert E\cap Q_{i}\right\vert
\{f_{Q_{i}}-f^{\ast}(t)\}\right) \\
&  =(I)+(II).
\end{align*}
Let $J=\{i:f_{Q_{i}}>f^{\ast}(t)\},$ then%
\begin{align*}
(II)  &  =%
{\displaystyle\sum\limits_{i\in N}}
\left\vert E\cap Q_{i}\right\vert \{f_{Q_{i}}-f^{\ast}(t)\}\\
&  \leq%
{\displaystyle\sum\limits_{i\in J}}
\left\vert E\cap Q_{i}\right\vert \{f_{Q_{i}}-f^{\ast}(t)\}\\
&  \leq%
{\displaystyle\sum\limits_{i\in J}}
\left\vert \Omega\cap Q_{i}\right\vert \{f_{Q_{i}}-f^{\ast}(t)\}\\
&  \leq%
{\displaystyle\sum\limits_{i\in J}}
\left\vert \Omega^{c}\cap Q_{i}\right\vert \{f_{Q_{i}}-f^{\ast}(t)\}\\
&  =%
{\displaystyle\sum\limits_{i\in J}}
\int_{\Omega^{c}\cap Q_{i}}\{f_{Q_{i}}-f^{\ast}(t)\}dx\\
&  \leq%
{\displaystyle\sum\limits_{i\in J}}
\int_{\Omega^{c}\cap Q_{i}}\{f_{Q_{i}}-f(x)\}dx\text{ (since }\Omega
^{c}\subset E^{c}\text{)}\\
&  \leq%
{\displaystyle\sum\limits_{i\in J}}
\int_{Q_{i}}\left\vert f_{Q_{i}}-f(x)\right\vert dx\\
&  \leq%
{\displaystyle\sum\limits_{i\in J}}
\frac{1}{\left\vert Q_{i}\right\vert }\int_{Q_{i}}\int_{Q_{i}}\left\vert
f(y)-f(x)\right\vert dxdy\\
&  \leq GaRo_{p}(f,Q_{0})\left(
{\displaystyle\sum\limits_{i\in N}}
\left\vert Q_{i}\right\vert \right)  ^{1/p^{\prime}}.
\end{align*}
Likewise,%
\begin{align*}
(I)  &  =%
{\displaystyle\sum\limits_{i\in N}}
\int_{Q_{i}}\{f(x)-f_{Q_{i}}\}dx\\
&  =%
{\displaystyle\sum\limits_{i\in N}}
\frac{1}{\left\vert Q_{i}\right\vert }\int_{Q_{i}}\int_{Q_{i}}%
(f(x)-f(y))dxdy\\
&  \leq%
{\displaystyle\sum\limits_{i\in N}}
\frac{1}{\left\vert Q_{i}\right\vert }\int_{Q_{i}}\int_{Q_{i}}\left\vert
f(x)-f(y)\right\vert dxdy\\
&  \leq GaRo_{p}(f,Q_{0})\left(
{\displaystyle\sum\limits_{i\in N}}
\left\vert Q_{i}\right\vert \right)  ^{1/p^{\prime}}.
\end{align*}
Combining the inequalities we have obtained,%
\begin{align*}
t\left(  f^{\ast\ast}(t)-f^{\ast}(t)\right)   &  \leq2GaRo_{p}(f,Q_{0})\left(
%
{\displaystyle\sum\limits_{i\in N}}
\left\vert Q_{i}\right\vert \right)  ^{1/p^{\prime}}\\
&  \leq2GaRo_{p}(f,Q_{0})(2^{n+1})^{1/p^{\prime}}2^{-1/p^{\prime}%
}t^{1/p^{\prime}}.
\end{align*}
Therefore,%
\[
\sup_{t\leq\left\vert Q_{0}\right\vert /4}t^{1/p}\left(  f^{\ast\ast
}(t)-f^{\ast}(t)\right)  \leq2^{n/p^{\prime}+1}GaRo_{p}(f,Q_{0}).
\]
To deal with $t>\left\vert Q_{0}\right\vert /4,$ we note that $t\left(
f^{\ast\ast}(t)-f^{\ast}(t)\right)  =\int_{f^{\ast}(t)}^{\infty}\lambda
_{f}(s)ds\leq\int_{0}^{\infty}\lambda_{f}(s)ds=\left\Vert f\right\Vert
_{L^{1}};$ therefore,%
\begin{align*}
t^{1/p}\left(  f^{\ast\ast}(t)-f^{\ast}(t)\right)   &  \leq t^{-1/p^{\prime}%
}\left\Vert f\right\Vert _{L^{1}}\\
&  \leq\left(  \frac{4}{\left\vert Q_{0}\right\vert }\right)  ^{1/p^{\prime}%
}\left\Vert f\right\Vert _{L^{1}}.
\end{align*}
Thus,%
\[
\sup_{t}t^{1/p}\left(  f^{\ast\ast}(t)-f^{\ast}(t)\right)  \leq2^{n/p^{\prime
}+1}GaRo_{p}(f,Q_{0})+\left(  \frac{4}{\left\vert Q_{0}\right\vert }\right)
^{1/p^{\prime}}\left\Vert f\right\Vert _{L^{1}},
\]
and the desired result follows by Theorem \ref{teoweakp}.
\end{proof}

\section{Another characterization of the $L(p,\infty)$ spaces, $1<p\leq\infty
$\label{sec:marcin}}

The purpose of this section is to give a proof of Theorem \ref{teoweakinfty}
and Theorem \ref{teoweakp}.

We start with the former.

\begin{proof}
Let $f$ be such that there exists $C>0$ such that (\ref{form1}) holds for all
$t>0$. Then, we have%
\begin{align*}
\left(  f^{\ast\ast}(t)-f^{\ast}(t)\right)  t  &  =\int_{f^{\ast}(t)}^{\infty
}\lambda_{f}(s)ds\\
&  \leq C\lambda_{f}(f^{\ast}(t))\\
&  \leq Ct.
\end{align*}
Thus,%
\begin{align*}
\left\Vert f\right\Vert _{L(\infty,\infty)}  &  \leq\inf\{C:(\ref{form1}%
)\text{ holds}\}\\
&  =\left\Vert f\right\Vert _{L(\infty,\infty)}^{\#\#}.
\end{align*}
Conversely, suppose that $f\in L(\infty,\infty).$ Then, for all $t>0,$ we
have,%
\[
\int_{f^{\ast}(t)}^{\infty}\lambda_{f}(s)ds=\left(  f^{\ast\ast}(t)-f^{\ast
}(t)\right)  t\leq t\left\Vert f\right\Vert _{L(\infty,\infty)}.
\]
Therefore,%
\[
\int_{f^{\ast}(\lambda_{f}(t))}^{\infty}\lambda_{f}(s)ds\leq\lambda
_{f}(t)\left\Vert f\right\Vert _{L(\infty,\infty)}.
\]
Now, since $f^{\ast}(\lambda_{f}(t))\leq t,$ we have%
\[
\int_{t}^{\infty}\lambda_{f}(s)ds\leq\lambda_{f}(t)\left\Vert f\right\Vert
_{L(\infty,\infty)}.
\]
Consequently,%
\[
\left\Vert f\right\Vert _{L(\infty,\infty)}^{\#\#}\leq\left\Vert f\right\Vert
_{L(\infty,\infty)},
\]
concluding the proof.
\end{proof}

We proceed with the proof of Theorem \ref{teoweakp}.

\begin{proof}
We trivially have $\left\Vert f\right\Vert _{L(p,\infty)}^{\#}\leq\left\Vert
f\right\Vert _{L(p,\infty)}.$ Moreover, if $f^{\ast\ast}(\infty)=0,$ then%
\begin{align*}
f^{\ast\ast}(t)t^{1/p}  &  =t^{1/p}\int_{t}^{\infty}\left(  f^{\ast\ast
}(s)-f^{\ast}(s)\right)  \frac{ds}{s}\\
&  =t^{1/p}\int_{t}^{\infty}\left(  f^{\ast\ast}(s)-f^{\ast}(s)\right)
s^{1/p}s^{-1/p}\frac{ds}{s}\\
&  \leq t^{1/p}\left\Vert f\right\Vert _{L(p,\infty)}^{\#}\int_{t}^{\infty
}s^{-1/p}\frac{ds}{s}\\
&  =p\left\Vert f\right\Vert _{L(p,\infty)}^{\#}.
\end{align*}
Consequently,
\[
\left\Vert f\right\Vert _{L(p,\infty)}\leq p\left\Vert f\right\Vert
_{L(p,\infty)}^{\#}.
\]
The last part of the result follows exactly as the proof of Theorem
\ref{teoweakinfty} (the case $p=\infty).$ For example, from%
\[
\int_{t}^{\infty}\lambda_{f}(s)ds\leq\left\Vert f\right\Vert _{L(p,\infty
)}^{\#\#}\left(  \lambda_{f}(t)\right)  ^{1-1/p}%
\]
we get%
\[
\left(  f^{\ast\ast}(t)-f^{\ast}(t)\right)  t=\int_{f^{\ast}(t)}^{\infty
}\lambda_{f}(s)ds\leq\left\Vert f\right\Vert _{L(p,\infty)}^{\#\#}t^{1-1/p}%
\]
and therefore%
\[
\left\Vert f\right\Vert _{L(p,\infty)}^{\#}\leq\left\Vert f\right\Vert
_{L(p,\infty)}^{\#\#}.
\]
Conversely, for all $t>0,$%
\begin{align*}
t\left\Vert f\right\Vert _{L(p,\infty)}^{\#}  &  \geq tt^{1/p}\left(
f^{\ast\ast}(t)-f^{\ast}(t)\right) \\
&  \geq t^{1/p}\int_{f^{\ast}(t)}^{\infty}\lambda_{f}(s)ds.
\end{align*}
Thus,
\begin{align*}
\lambda_{f}(t)\left\Vert f\right\Vert _{L(p,\infty)}^{\#}  &  \geq\left(
\lambda_{f}(t)\right)  ^{1/p}\int_{f^{\ast}(\lambda_{f}(t))}^{\infty}%
\lambda_{f}(s)ds\\
&  \geq\left(  \lambda_{f}(t)\right)  ^{1/p}\int_{t}^{\infty}\lambda_{f}(s)ds,
\end{align*}
and the desired result follows.
\end{proof}

\begin{remark}
Observe that when $p=1,$ the previous considerations provide a
characterization of $L^{1},$ not of $L(1,\infty).$ Indeed, the corresponding
result for $p=1$ is%
\begin{align*}
\sup_{t}f^{\ast\ast}(t)t  &  =\sup_{t}\int_{0}^{t}f^{\ast}(s)ds=\left\Vert
f\right\Vert _{L^{1}}\\
&  =\sup_{t}\int_{t}^{\infty}\lambda_{f}(s)ds.
\end{align*}
In other words, $L^{1}$ is characterized by the condition%
\[
\sup_{t>0}\int_{t}^{\infty}\lambda_{f}(s)ds\leq C.
\]
Note that in this case $\left(  \lambda_{f}(t)\right)  ^{1-1/1}=1.$
\end{remark}

To conclude this section we prove Corollary \ref{elcorooneil}.

\begin{lemma}
Let $1<p<\infty.$ Then%
\[
\left\Vert f\right\Vert _{L(p,\infty)}^{\#\#}\approx\inf\{C^{1/p}:\int
_{t}^{\infty}\lambda_{f}(s)ds\leq Ct^{1-p}\}.
\]

\end{lemma}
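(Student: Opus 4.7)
The plan is to prove the lemma by chaining two equivalences, both of elementary nature. Let $N(f)=\inf\{C^{1/p}:\int_{t}^{\infty}\lambda_{f}(s)\,ds\le Ct^{1-p}\text{ for all }t>0\}$. I would first show $N(f)\approx\|f\|_{L(p,\infty)}^{\ast}$ directly, and then invoke Theorem \ref{teoweakp} to conclude $\|f\|_{L(p,\infty)}^{\ast}\approx\|f\|_{L(p,\infty)}^{\#\#}$.

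For the inequality $N(f)\lesssim\|f\|_{L(p,\infty)}^{\ast}$, I would use the pointwise bound $\lambda_{f}(s)\le(\|f\|_{L(p,\infty)}^{\ast})^{p}s^{-p}$, which follows from $(\|f\|_{L(p,\infty)}^{\ast})^{p}\ge s^{p}\lambda_{f}(s)$. Integrating this over $(t,\infty)$ gives $\int_{t}^{\infty}\lambda_{f}(s)\,ds\le\frac{(\|f\|_{L(p,\infty)}^{\ast})^{p}}{p-1}t^{1-p}$, so the choice $C=(\|f\|_{L(p,\infty)}^{\ast})^{p}/(p-1)$ is admissible, yielding $N(f)\le(p-1)^{-1/p}\|f\|_{L(p,\infty)}^{\ast}$.

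For the reverse inequality $\|f\|_{L(p,\infty)}^{\ast}\lesssim N(f)$, the key observation is the ``Chebyshev step'': since $\lambda_{f}$ is non-increasing, $t\lambda_{f}(2t)\le\int_{t}^{2t}\lambda_{f}(s)\,ds\le\int_{t}^{\infty}\lambda_{f}(s)\,ds\le N(f)^{p}t^{1-p}$, hence $\lambda_{f}(2t)\le N(f)^{p}t^{-p}$. Replacing $2t$ by $u$ gives $u^{p}\lambda_{f}(u)\le 2^{p}N(f)^{p}$, so $\|f\|_{L(p,\infty)}^{\ast}\le 2N(f)$.

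Combining these two steps yields $N(f)\approx\|f\|_{L(p,\infty)}^{\ast}$, and then the equivalence $\|f\|_{L(p,\infty)}^{\ast}\approx\|f\|_{L(p,\infty)}^{\#\#}$ established in Theorem \ref{teoweakp} finishes the proof. There is no real obstacle here; the argument is essentially a bookkeeping of constants, the only mildly delicate point being to use the monotonicity of $\lambda_{f}$ on a dyadic interval to convert a bound on the tail integral into a pointwise bound on $\lambda_{f}$ itself.
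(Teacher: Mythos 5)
Your proof is correct and follows essentially the same route as the paper: both directions reduce to the classical weak-type quasinorm $\left\Vert f\right\Vert _{L(p,\infty)}^{\ast}$ (integrating the pointwise bound $\lambda_{f}(s)\leq(\left\Vert f\right\Vert _{L(p,\infty)}^{\ast})^{p}s^{-p}$ one way, and using monotonicity of $\lambda_{f}$ on a dyadic interval the other way), and then invoke the norm equivalences from Theorem \ref{teoweakp}. Your ``Chebyshev step'' is in fact a cleaner version of the paper's corresponding computation, which manipulates $\lambda_{f}(s)^{1/p}$ to the same effect.
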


\begin{proof}
Note that%
\begin{equation}
\left(  \lambda_{f}(t)\right)  ^{1/p}\leq\left\Vert f\right\Vert
_{L(p,\infty)}t^{-1}\Leftrightarrow\lambda_{f}(t)\leq\left\Vert f\right\Vert
_{L(p,\infty)}^{p}t^{-p}.\label{chiado}%
\end{equation}
Suppose that $f\in L(p,\infty).$ By the previous Theorem,
\begin{align*}
\int_{t}^{\infty}\lambda_{f}(s)ds &  \leq\left\Vert f\right\Vert
_{L(p,\infty)}^{\#\#}\left(  \lambda_{f}(t)\right)  ^{1-1/p}\\
&  \leq\left\Vert f\right\Vert _{L(p,\infty)}^{\#\#}\left\Vert f\right\Vert
_{L(p,\infty)}^{p(1-1/p)}t^{-p(1-1/p)}\text{ (by (\ref{chiado}))}\\
&  \leq C\left\Vert f\right\Vert _{L(p,\infty)}^{p}t^{1-p}.
\end{align*}
Conversely, suppose that%
\[
\int_{t}^{\infty}\lambda_{f}(s)ds\leq Ct^{1-p}.
\]
Then, since $\lambda_{f}$ decreases,
\begin{align*}
\lambda_{f}(t)^{1/p}\frac{t}{2} &  \leq\int_{t/2}^{t}\lambda_{f}%
(s)^{1/p}ds=\int_{t/2}^{t}\lambda_{f}(s)\lambda_{f}(s)^{1/p-1}ds\\
&  \leq\lambda_{f}(t/2)^{1/p-1}\int_{t/2}^{t}\lambda_{f}(s)ds\\
&  \leq\lambda_{f}(t/2)^{1/p-1}\int_{t/2}^{\infty}\lambda_{f}(s)ds\\
&  \leq\lambda_{f}(t/2)^{1/p-1}C2^{p-1}t^{1-p}%
\end{align*}
Therefore,%
\[
\lambda_{f}(t/2)^{-1/p+1}\lambda_{f}(t)^{1/p}\leq\tilde{C}t^{-p}%
\]
and, consequently,%
\[
\lambda_{f}(t)^{-1/p+1}\lambda_{f}(t)^{1/p}\leq\tilde{C}t^{-p}.
\]
The desired result follows.
\end{proof}

\section{Final Remarks\label{sec:final}}

\subsection{Products and tensor products with $L(\infty,\infty)$}

The new formulae we presented for the computation of the \textquotedblleft
norm" $\left\Vert {}\right\Vert _{L(\infty,\infty)}$ has several applications.
Here, following O'Neil \cite{oneil} (cf. also \cite{astashkin}, \cite{tensor}%
), we shall briefly consider tensor products with $L(\infty,\infty).$ It is
not our purpose to develop the most general results, but to give a flavor of
the ideas involved.

\begin{theorem}
\label{teotensorial}Let $(\Omega_{1},\mu_{1}),(\Omega_{2},\mu_{2}),$ be
measure spaces. Then,
\begin{equation}
L(\infty,\infty)(\Omega_{1})\otimes L^{\infty}(\Omega_{2})\subset
L(\infty,\infty)(\Omega_{1}\times\Omega_{2}), \label{tensorial}%
\end{equation}
with%
\[
\left\Vert f\otimes g\right\Vert _{L(\infty,\infty)(\Omega_{1}\times\Omega
_{2})}\leq\left\Vert f\right\Vert _{L(\infty,\infty)(\Omega_{1})}\left\Vert
g\right\Vert _{L^{\infty}(\Omega_{2})}.
\]

\end{theorem}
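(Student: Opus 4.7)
The strategy is to use the distribution-function characterization of $L(\infty,\infty)$ provided by Theorem \ref{teoweakinfty}: it suffices to show that the function $h(x,y)=f(x)g(y)$ satisfies
\[
\int_{t}^{\infty}\lambda_{h}(s)\,ds\leq\|f\|_{L(\infty,\infty)(\Omega_{1})}\|g\|_{L^{\infty}(\Omega_{2})}\,\lambda_{h}(t)\qquad\text{for all }t>0,
\]
since that will immediately give the claimed bound on $\|h\|^{\#\#}_{L(\infty,\infty)}=\|h\|_{L(\infty,\infty)}$. Without loss of generality I may replace $f,g$ by $|f|,|g|$ and assume both are nonnegative, because all quantities in (\ref{form1}) depend only on $|h|$.

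The key computation is the distribution function of a tensor product. By Fubini applied to the set $\{(x,y):f(x)g(y)>s\}$, for $s>0$ one gets
\[
\lambda_{h}(s)=\int_{\Omega_{2}}\lambda_{f}\!\left(\tfrac{s}{g(y)}\right)d\mu_{2}(y),
\]
with the natural convention that $\lambda_{f}(s/g(y))=0$ when $g(y)=0$. Integrating in $s$, interchanging integrals and making the substitution $u=s/g(y)$ in the inner integral yields the identity
\[
\int_{t}^{\infty}\lambda_{h}(s)\,ds=\int_{\Omega_{2}}g(y)\int_{t/g(y)}^{\infty}\lambda_{f}(u)\,du\,d\mu_{2}(y).
\]

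At this point the characterization (\ref{form1}) applied to $f\in L(\infty,\infty)(\Omega_{1})$ bounds the inner integral by $\|f\|_{L(\infty,\infty)(\Omega_{1})}\,\lambda_{f}(t/g(y))$, and using $g(y)\leq\|g\|_{L^{\infty}(\Omega_{2})}$ pointwise absorbs the factor of $g(y)$ into the norm, leaving the integral over $\Omega_{2}$ of $\lambda_{f}(t/g(y))\,d\mu_{2}(y)$, which is precisely $\lambda_{h}(t)$. Combining these estimates delivers the displayed inequality and, via Theorem \ref{teoweakinfty}, the desired norm bound.

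I do not anticipate a serious obstacle: the only technical care is in handling the set $\{g=0\}$ (harmless, as the integrand is zero there) and justifying Fubini for the nonnegative integrand $\lambda_{f}(s/g(y))$ on $(t,\infty)\times\Omega_{2}$, which is automatic by Tonelli. The substitution $u=s/g(y)$ is fiber-by-fiber in $y$, so it does not require anything beyond elementary one-variable calculus. The crux is really just recognizing that the characterization (\ref{form1}) is perfectly suited to the form in which the inner integral arises after Fubini.
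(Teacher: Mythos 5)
Your proof is correct and follows essentially the same route as the paper: integrate the distribution function of $f\otimes g$ over $(t,\infty)$, apply Tonelli and a fiberwise substitution, invoke the characterization (\ref{form1}) for $f$, absorb the remaining factor by $\left\Vert g\right\Vert _{L^{\infty}}$, and recognize $\lambda_{f\otimes g}(t)$. The only (cosmetic) difference is that you derive the identity $\lambda_{f\otimes g}(s)=\int_{\Omega_{2}}\lambda_{f}(s/g(y))\,d\mu_{2}(y)$ directly by Fubini, whereas the paper cites O'Neil's equivalent formula $\lambda_{f\otimes g}(z)=\int_{0}^{\infty}\lambda_{f}(z/u)\,d(-\lambda_{g}(u))$.
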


\begin{proof}
Let $f\in L(\infty,\infty)(\Omega_{1}),g\in L^{\infty}(\Omega_{2}).$ The
distribution function of $f\otimes g$ is computed in \cite[Lemma 7.1 (2), page
97]{oneil}%
\begin{equation}
\lambda_{f\otimes g}(z)=\int_{0}^{\infty}\lambda_{f}(\frac{z}{u}%
)d(-\lambda_{g}(u)),z>0. \label{fundamenta}%
\end{equation}
Therefore, on account that $g\in L^{\infty},$ we have%
\begin{equation}
\lambda_{f\otimes g}(z)=\int_{0}^{\left\Vert g\right\Vert _{L^{\infty}%
(\Omega_{2})}}\lambda_{f}(\frac{z}{u})d(-\lambda_{g}(u)). \label{fundamenta1}%
\end{equation}
Then, by Tonnelli's theorem, for all $t>0,$%
\begin{align*}
\int_{t}^{\infty}\lambda_{f\otimes g}(z)dz  &  =\int_{t}^{\infty}\int
_{0}^{\left\Vert g\right\Vert _{L^{\infty}(\Omega_{2})}}\lambda_{f}(\frac
{z}{u})d(-\lambda_{g}(u))dz\\
&  =\int_{0}^{\left\Vert g\right\Vert _{L^{\infty}(\Omega_{2})}}\int
_{t}^{\infty}\lambda_{f}(\frac{z}{u})dzd(-\lambda_{g}(u))\\
&  =\int_{0}^{\left\Vert g\right\Vert _{L^{\infty}(\Omega_{2})}}\int_{\frac
{t}{u}}^{\infty}\lambda_{f}(r)udrd(-\lambda_{g}(u))\\
&  \leq\left\Vert g\right\Vert _{L^{\infty}(\Omega_{2})}\left\Vert
f\right\Vert _{L(\infty,\infty)(\Omega_{1})}^{\#\#}\int_{0}^{\left\Vert
g\right\Vert _{L^{\infty}(\Omega_{2})}}\lambda_{f}(\frac{t}{u})d(-\lambda
_{g}(u))\\
&  =\left\Vert g\right\Vert _{L^{\infty}(\Omega_{2})}\left\Vert f\right\Vert
_{L(\infty,\infty)(\Omega_{1})}^{\#\#}\lambda_{f\otimes g}(t)\text{ (by
(\ref{fundamenta1})).}%
\end{align*}
Hence, by Theorem \ref{teoweakinfty},%
\[
\left\Vert f\otimes g\right\Vert _{L(\infty,\infty)(\Omega_{1}\times\Omega
_{2})}^{\#\#}\leq\left\Vert f\right\Vert _{L(\infty,\infty)(\Omega_{1}%
)}^{\#\#}\left\Vert g\right\Vert _{L^{\infty}(\Omega_{2})}.
\]
\end{proof}

\subsection{More Problems}

1. It seems to us that our approach to prove the second part of Theorem
\ref{teomarkao} can be modified to study the rearrangement inequality of
Garsia-Rodemich \cite[Theorem 7.3]{garro} in the $n-$dimensional case.

2. It would be of interest to follow up on the suggestion of Garsia-Rodemich
and prove a version of Theorem \ref{teomarkao} using the methods of
\cite{garsia}.

3. It would be of interest to complete the study of tensor products with
$L(\infty,\infty).$


\begin{thebibliography}{99}                                                                                               %


\bibitem {aalto}D. Aalto, L. Berkovits, O. E. Kansanen and H. Yue,
\textsl{John-Nirenberg lemmas for a doubling measure}. Studia Math.
\textbf{204} (2011), 21-37.

\bibitem {astashkin}S. V. Astashkin, \textsl{Tensor product in symmetric
function spaces}. Collect. Math. \textbf{48} (1997), 375--391.

\bibitem {bds}C. Bennett, R. DeVore and R. Sharpley, \textsl{Weak-}$L^{\infty
}$\textsl{ and }$BMO$. Ann. Math. \textbf{113} (1981), 601-611.

\bibitem {bs}C. Bennett and R. Sharpley, \textsl{Interpolation of operators}.
Academic Press, 1988.

\bibitem {calderon}A. P. Calder\'{o}n, \textsl{Spaces between }$L^{1}$\textsl{
and }$L^{\infty}$\textsl{ and the theorem of Marcinkiewicz}. Studia Math.
\textbf{26} (1966), 273-299.

\bibitem {cwikelnilsson}M. Cwikel and P. Nilsson, \textsl{Interpolation of
Marcinkiewicz spaces}. Math. Scand. \textbf{56} (1985), 29-42.

\bibitem {cwikel}M. Cwikel, Y. Sagher, P. Shvartsman, \textsl{A new look at
the John--Nirenberg and John--Str\"{o}mberg theorems for BMO. }J. Funct. Anal.
\textbf{263} (2012), 129-167.

\bibitem {dy}F. J. Dyson, \textsl{Some guesses in the theory of partitions}.
Eureka \textbf{8 }(1944), 10-15.

\bibitem {garsia}A. M. Garsia, \textsl{Martingale inequalities: Seminar notes
on recent progress}. Mathematics Lecture Notes Series. W. A. Benjamin, Inc.,
Reading, Mass.-London-Amsterdam, 1973.

\bibitem {garro}A. M. Garsia and E. Rodemich, \textsl{Monotonicity of certain
functional under rearrangements}. Ann. Inst. Fourier (Grenoble) \textbf{24}
(1974), 67-116.

\bibitem {jn}F. John and L. Nirenberg, \textsl{On functions of bounded mean
oscillation}. Comm. Pure Appl. Math. \textbf{14} (1961), 415-426.

\bibitem {mamiast}J. Martin and M. Milman, \textsl{Fractional Sobolev
Inequalities: Symmetrization, Isoperimetry and Interpolation}. Ast\'{e}risque
\textbf{366} (2014).

\bibitem {lund}M. Milman, \textsl{Rearrangements of }$BMO$\textsl{ functions
and interpolation}, in Lecture Notes in Mathematics 1070, pp 208-212.

\bibitem {fenicae1}M. Milman, \textsl{A note on Gehring's lemma}. Ann. Acad.
Sci. Fenn. \textbf{21} (1996), 389-398.

\bibitem {fenicae2}M. Milman, \textsl{A note on interpolation and higher
integrability}. Ann. Acad. Sci. Fenn. \textbf{23} (1998), 169-180.

\bibitem {corita}M. Milman, \textsl{BMO: Oscillations, self-improvement,
Gagliardo coordinate spaces and reverse Hardy inequalities}. arXiv:1505.02633,
to appear in Harmonic Analysis, Partial Differential Equations, Banach Spaces, and Operator Theory. 
Celebrating Cora Sadosky's Life. Volume 1 (Edited by S. Marcantognini, M. C. Pereyra, A. Stokolos
and W. Urbina), AWM-Springer Series.

\bibitem {tensor}M. Milman, \textsl{Some new function spaces and their tensor
products}. Bull. Australian Math. Soc. \textbf{19} (1978), 147 - 149.

\bibitem {oklander}E. T. Oklander, \textsl{Interpolaci\'{o}n, espacios de
Lorentz y teorema de Marcinkiewicz}. Cursos y Seminarios de Matem\'{a}ticas
\textbf{20} (1965), Univ. Buenos Aires, 1965.

\bibitem {oneil}R. O'Neil, \textsl{Integral transforms and tensor products on
Orlicz spaces and }$L(p,q)$\textsl{ spaces}. J. d'Analyse Math. \textbf{21}
(1968), 4-276.

\bibitem {pus}E. Pustylnik, \textsl{On some properties of generalized
Marcinkiewicz spaces}. Studia Math. \textbf{144} (2001), 227-243.

\bibitem {torchinsky}A. Torchinsky, \textsl{Real variable methods in harmonic
analysis}. Academic Press, 1986.
\end{thebibliography}
\end{document}